\title{\Large \textbf{A REFINED CONJECTURE FOR FACTORING ITERATES OF
QUADRATIC POLYNOMIALS
OVER FINITE FIELDS}}
\author{\sc Vefa Goksel\thanks{\small Department of Mathematics, Bilkent
University, 06800
Ankara,Turkey,
\texttt{\href{mailto:v$_-$goksel@ug.bilkent.edu}{v$_-$goksel@ug.bilkent.edu.tr}}}
\and
\sc Shixiang Xia\thanks{\small Olin School of Business, Washington
University, Saint Louis, Missouri 63130,
\texttt{\href{mailto:sxia@wustl.edu}{sxia@wustl.edu}}}
\and
\sc Nigel Boston\thanks{\small Department of Mathematics, University of
Wisconsin,
Madison, Wisconsin 53706,
\texttt{\href{mailto:boston@math.wisc.edu}{boston@math.wisc.edu}}}
}
\date{\small Mathematics Subject Classifications: Primary 11T55, 37P25, 60J20.}
\newtheorem{thm}{Theorem}[section]
\newtheorem{lemma}[thm]{Lemma}
\newtheorem{conj}[thm]{Conjecture}
\newtheorem{corollary}[thm]{Corollary}
\newtheorem{observation}[thm]{Observation}
\newtheorem{remark}[thm]{Remark}
\theoremstyle{definition}
\newtheorem{definition}{Definition}[section]
\newtheorem{example}[definition]{Example}
\theoremstyle{remark}
\def\F{\mathbb{F}}
\def\N{\mathbb{N}}
\begin{document}

\maketitle

\noindent \textsc{Abstract.} Jones and Boston conjectured that
the factorization process for iterates of irreducible quadratic
polynomials over finite fields is approximated by a  Markov model. In this
paper, we find unexpected and intricate  
behavior for some quadratic polynomials, in particular for the ones with tail size one. We also
propose a multi-step Markov model that explains these new observations  
better than the model of Jones and Boston.
\section{Introduction}
 Let $f$ be an irreducible quadratic polynomial over a finite field $\F_q$ of
odd order $q$. We are interested in understanding the factorization of
iterates of $f$. This problem was previously studied in \cite{perez}, \cite{gomez}, \cite{ahmadi}, \cite{ayad}, and  \cite{jones+boston}. In \cite{jones+boston}, the authors  associated a Markov process to $f$ and
conjectured that its limiting distribution explains 
the shape of the factorization of large iterates of $f$. In this paper, we give new data that strongly suggest a  
more complicated model is required in certain cases, and we propose a  
multi-step Markov model that fits the new data well. Furthermore, we also conjecture that the original Markov model applies except in these certain cases.\\

The paper is structured as follows. In Section 2, we make some definitions,
give preliminary results, and recall background to the problem. In Section
3, we provide some examples with new, unexpected behavior. In
Section 4, we propose a multi-step Markov model to describe the factorization of iterates and
conjecture that it provides a better explanation for the process. Section 5
supports this model via actual data corresponding to the examples given in
Section 3. In Section 6, we summarize matters with some further conjectures and list 
additional computational results we have obtained.
\section{Set-up}

\begin{definition}\label{def1}
Let $\F_q$ be a finite field of odd order $q$. Consider a
quadratic polynomial $f(x)$ defined over $\F_q$. For all $n\in\N$, we define the
\emph{$n$th iterate} of
$f$ to be $f^{n}(x):=f({f^{n-1}(x)}).$ We make the convention that $f^{0}(x):=x$.
\end{definition}
For example, suppose $f(x) = x^2+1 \in \F_{7}[x]$. Then, $f^2(x) = f(f(x)) =
x^4+2x^2+2$, $f^3(x)=f^2(f(x))= x^8+4x^6+x^4+x^2+5$, and so on.

\begin{definition}\label{def2}
Let $f(x)=ax^2+bx+c \in\F_q[x]$
($a\neq 0$) and $\alpha=\frac{-b}{2a}$
be the critical point of $f$. The \emph{critical orbit} of $f$ is
the set $\mathcal{O} := \{f^k(\alpha)\ |\ k=1,2,3,\cdots\}$ and
the number of elements of $\mathcal{O}$ is the \emph{orbit size} of $f$, denoted $o$.
\end{definition}
To illustrate the definition of the critical orbit, we consider the
previous
example. The critical point of $f(x) = x^2+1$ is $0$ and $f(0)=1$,
$f^2(0)=2$,
$f^3(0) =5$, $f^4(0)=5$. It follows that
$f^k(0)=5$ for all $k\geq 3$. Therefore, the critical orbit for
$f(x)=x^2+1 \in \F_{7}[x]$ is $\{1,2,5\}$.

\begin{definition}\label{def3}
Let $f$ be a quadratic polynomial
over $\F_q$ and $\alpha$ be the critical point of $f$. We define
the  \emph{tail} of $f$ to be the set
\begin {align*}
\mathcal{T}:= \{f^k(\alpha)\ | k \geq 1, f^i(\alpha)\neq f^k(\alpha) \forall  i\neq
k\}.
\end{align*}
Similarly, we call the number of elements of $\mathcal{T}$ \emph{the tail
size} of $f$ and denote it by $t$.
\end{definition}

Having taken $f(x)=x^2+c$, the critical orbit of
$f(x)\in \F_q[x]$ becomes $\{c, c^2+c, (c^2+c)^2+c,\cdots\}$. 
\begin{definition}\label{def4}
Noting that $f^n(c)$ is the $(n+1)$th element of the critical orbit
of $f(x)=x^2+c$, we define the \emph{difference polynomial}
$p_{a,b}(c)$ to be
$$  \frac{f^a(c)-f^b(c)}{LCM(f^{b+1}(c)-f^b(c),f^{a-1}(c)-f^{b-1}(c),f^{a-2}(c)-f^{b-2}(c),\cdots, f^{a-b}(c)-f^0(c)) } $$
if $a\neq b+1$ and
$$ \frac{f^a(c)-f^b(c)}{{c}LCM(f^{b+1}(c)-f^b(c),f^{a-1}(c)-f^{b-1}(c),f^{a-2}(c)-f^{b-2}(c),\cdots, f^{a-b}(c)-f^0(c)) } $$
if $a=b+1$.

\end{definition}
\begin{remark}\label{Remark}
$x^2+c_0 \in \F_q[x]$ is a quadratic polynomial with orbit size $o$ and tail
size $t$ iff $c_0$ is a root of $p_{o,t}$ in $\F_q$. The
polynomials in the denominators rescue us from an earlier repetition that would
lead to the correct tail size but smaller orbit size or vice versa.

To illustrate this, suppose we want a quadratic
polynomial $x^2+c\in \F_q[x]$ of orbit size 3 and tail size 1. This
immediately yields
$f^3(c)-f^1(c)={c}^8+4{c}^7+6{c}^6+6{c}^5+5{c}^4+2{c}^3=0$.
This is not sufficient, however, because, for instance, if we set $c=-2$
in this equation, it  holds but the critical orbit is only
$\{-2,2\}$. 

In fact, the above octic factors as $c^3 (c+1)^2 (c+2) (c^2+1)$. All
the factors other than $c^2+1$ ($=p_{3,1}(c)$) lead to degenerate cases.

\end{remark}
\begin{definition}\label{def5}
Let $f(x)\in \F_q[x]$ be an irreducible quadratic polynomial with critical orbit $\mathcal{O}$ and $g(x)\in
\F_q[x]$. We define the
\emph{type of $g(x)$} at $\beta$ to be $s$ if $g(\beta)$ is a square in
$\F_q$ and $n$ if it is not a square. The type of $g$ is a string of
length $|\mathcal{O}|$ whose $k$th entry is the type of
$g(x)$ at the $k$th entry of $\mathcal{O}$. The $k$th entry is also called the $k$th
\emph{digit}.
\end{definition}
For instance, given $x^2+1\in \F_7[x]$, consider $g(x)=x^2+2x+2$. Then,
$g(1)=5$, $g(2)=3$, $g(5)=2$, which implies that the type of $g$ is $nns$.
\begin{definition}\label{def6}
Given an irreducible quadratic polynomial $f(x)\in \F_q[x]$ and a
polynomial $g(x)\in \F_q[x]$, we call the factors of $g(f(x))$ the
\emph{children} of $g$. Also, for any natural number $m$, the factors of
$g(f^{m}(x))$ are called the $m-step$ $descendants$ of $g$.
\end{definition}
\begin{definition}\label{def7}
Let $f(x)\in \F_q[x]$ be a quadratic polynomial and $\gamma$ the unique
critical point of $f$. We say $\gamma$ is \emph{periodic} if there
exists an $i\in \mathbb{N}$ s.t. $f^i(\gamma)=\gamma$.
\end{definition}

Next we quote a lemma which is one of the building blocks of
our paper.
\begin{lemma}\label{lemma}
\cite{jones+boston}
Suppose that  $f \in
\F_q[x]$ is
quadratic with critical orbit of length $o$ and all iterates separable. 
Let $g \in
\F_q[x]$ be irreducible of even degree.  Suppose that $h_1h_2$ is a
non-trivial
factorization of $g(f(x))$, and let $d_i$ (resp. $e_i$) be the $i$th
digit of the
type of $h_1$ (resp. $h_2$).  Then there is some $k$, $1 \leq k \leq o $,
with $d_o
= e_k$ and $e_o = d_k$.  Moreover, $k = o$ if and only if $\gamma$ is
periodic, and in the case $\gamma$ is not periodic, we have $k=t$, where
$t$ is the tail size of $f$.
\end{lemma}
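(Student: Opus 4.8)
The plan is to reduce everything to the split case, describe the two factors explicitly through the reflection symmetry of $f$ about its critical point, and then extract the crossing relation from a single dynamical identity about the critical orbit. The guiding observation is that the type is governed by a quadratic character, so I want to rewrite each digit as $\chi$ applied to one fixed polynomial evaluated at shifts of the orbit.

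First I would normalize. Since $g$ is irreducible of even degree $d\geq 2$ it has no root in $\F_q$, so $g(f^j(\alpha))\neq 0$ for every $j$, all the relevant type digits are defined, and separability forces $g\circ f$ to have $2d$ distinct roots. A non-trivial factorization can occur only when $f(x)-\beta$ splits over $\F_q(\beta)$ (for $\beta$ a root of $g$), and in that case $g\circ f$ is a product of exactly two monic irreducibles $h_1,h_2$ of degree $d$; I take $g$, $f$, and the $h_i$ monic. Writing $f(x)=(x-\alpha)^2+f(\alpha)$, the roots of $g\circ f$ are the reflection pairs $\alpha\pm u_i$ with $u_i^2=\beta_i-f(\alpha)$, where $\beta_1,\dots,\beta_d$ are the conjugates of $\beta$. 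The first key step is to show that each of $h_1,h_2$ contains exactly one root from every pair $\{\alpha+u_i,\alpha-u_i\}$. This I would obtain from the Frobenius action: irreducibility of $g$ makes Frobenius act transitively (as a $d$-cycle) on the pairs, so each of the two length-$d$ Frobenius orbits on the roots, namely the root sets of $h_1$ and $h_2$, must surject onto, hence biject with, the set of pairs.

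Granting this, I can write the root set of $h_1$ as $\{\alpha+U_i\}$ where the $U_i$ form one Frobenius orbit, so that $M(Y):=\prod_i(Y-U_i)$ lies in $\F_q[Y]$ and is monic of degree $d$; the complementary roots give $h_2$, and using that $d$ is even one checks $h_1(y)=M(y-\alpha)$ and $h_2(y)=M(\alpha-y)$ exactly. Setting $\delta_j:=f^j(\alpha)-\alpha$ and letting $\chi$ be the quadratic character of $\F_q$, the digits become simply $d_j=\chi(M(\delta_j))$ and $e_j=\chi(M(-\delta_j))$. The whole statement therefore reduces to producing a $k$ with $\delta_o=-\delta_k$, since then $d_o=\chi(M(\delta_o))=\chi(M(-\delta_k))=e_k$ and likewise $e_o=d_k$.

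The crux is thus the dynamical identity. Because the critical orbit has size $o$ we have $f^{o+1}(\alpha)=f^{t+1}(\alpha)$, so both $f^t(\alpha)$ and $f^o(\alpha)$ are $f$-preimages of $f^{t+1}(\alpha)$; as $f$ is two-to-one with preimages symmetric about $\alpha$, either $f^o(\alpha)=f^t(\alpha)$ or $f^o(\alpha)=2\alpha-f^t(\alpha)$. When $\gamma=\alpha$ is not periodic, $f^t(\alpha)$ (a tail point) and $f^o(\alpha)$ (a cycle point) are distinct, forcing $f^o(\alpha)=2\alpha-f^t(\alpha)$, i.e. $\delta_o=-\delta_t$, and I take $k=t$. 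When $\gamma$ is periodic, $f^o(\alpha)=\alpha$, so $\delta_o=0=-\delta_o$ and $k=o$ works, giving $d_o=e_o$; moreover $\delta_o=0$ characterizes periodicity, so $k=o$ occurs exactly in the periodic case. I expect the main obstacle to be the combinatorial Frobenius argument of the first step, namely pinning down that each factor meets every reflection pair exactly once and tracking the $(-1)^d$ sign so that $h_2(y)=M(\alpha-y)$ holds on the nose; the dynamical identity, once one notices that $f^t(\alpha)$ and $f^o(\alpha)$ share the image $f^{t+1}(\alpha)$, is then immediate.
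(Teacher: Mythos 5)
The paper does not prove this lemma itself --- it is quoted from \cite{jones+boston} --- and your argument is correct and is essentially the one given there: the involution $x\mapsto 2\alpha-x$ fixing $f$ interchanges the root sets of $h_1$ and $h_2$ (your polynomial $M$ makes this explicit, with the even degree killing the sign), and the digit relation then drops out of the orbit identity $f^o(\alpha)+f^k(\alpha)=2\alpha$ with $k=o$ or $k=t$ according as $\alpha$ is periodic or not. The only point worth noting is that your monic normalization of $g$ and the $h_i$ is not merely cosmetic but is the implicit convention under which the statement is true (rescaling $g$ by a nonsquare would break the digit equalities), and this is consistent with the paper's framework, where all factors of iterates of $x^2+c$ are taken monic.
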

In \cite{jones+boston}, Jones and Boston tried to explain the distribution of types of factors (weighted by their degree) of
iterates of $f$ by a Markov
model as follows: \\

We create a time-homogeneous Markov process $Y_1, Y_2, \ldots$ related
to $f$.
The state space
is the space of types of $f$, namely $\{n, s\}^{o}$, ordered lexicographically. 
We define the Markov process by giving its transition matrix $M =
(\mathcal{P}(Y_m = T_j
| Y_{m-1} = T_i))$, where $T_i$ and $T_j$ vary over all types.  
Note that the entries of each column of $M$ sum to $1$.  We
define $M$
by assuming that all \textit{allowable types} of children
arise with equal
probability.  To define allowable type, note that $f$ acts on its
critical orbit, and thus also on the set of types.  Indeed, if $T$ is a
type, then
$f(T)$ is obtained by shifting each entry one position to the left and
using the
former $m$th entry as the new final entry, where
$m$ is such that $f^{o+1}(\gamma) = f^m(\gamma)$.  If $g$ has type
$T$ which begins with $n$, then $g$ has only one child, 
and it will have type $f(T)$, the only allowable type in
this case.
If $T$ begins with $s$, then $g$ has two children, whose types have product
$f(T)$. Among pairs of types $T_1, T_2$ with $T_1T_2 = f(T)$, we call
allowable
those that satisfy the conclusion of Lemma 2.3, namely  $d_k =
e_o$ and $e_k = d_o$ with $k = o$ if $\gamma$ is periodic, and $k=t$ if
$\gamma$ is not periodic, where $t$ is the tail size of $f$.
\begin{definition}\label{def8}
We define an \emph{m-step transition matrix} as
$M_m=(\mathcal{P}(Y_{m+1} = T_j| Y_1=T_i)$ 
by assuming that all allowable choices of m-step transition
arise with equal probablity. Here, allowable refers to those that arise for the given $f$,
which turns out to be a subtle matter at the heart of this paper.
\end{definition}
\begin{remark}\label{remark}
Since transitions in a Markov model are
independent of each other, the model of Jones and Boston \cite{jones+boston} implies
that $M_m={M_1}^m$ always holds.
\end{remark}
\section{New Phenomena}
Contrary to what Jones and Boston suggested, we discover that
the story of these descendants can be quite different in certain cases. More
precisely, in these special cases, not every 2-step or 3-step
transition permitted by the above model actually occurs. Thus, a Markov model
does not apply to all quadratic polynomials. We now illustrate this idea
with three kinds of examples:
\begin{example}\label{eg1} The first kind has orbit size 3
and tail size 1. As computed earlier, $p_{3,1}(c)=c^2+1$,
so these are the quadratic polynomials of the form $f(x) = x^2+i$, where $i$ is a
square root of $-1$ in $\F_q$. (Note that to do so, we need $q \equiv 1
\pmod 4$ and in fact $q \equiv 5 \pmod 8$ to ensure that $f(x)$ is irreducible).
 The critical orbit is $\{i, i-1, -i\}$. Using Lemma 2.3, the
following 1-step transitions arise:
\begin{align*}
&nnn \mapsto nnn\\
&nns \mapsto nsn\\
&nsn \mapsto sns\\
&nss \mapsto sss\\
&snn \mapsto nns/ssn\ \text{or}\ nss/snn\\
&sns \mapsto nns/snn\ \text{or}\ nss/ssn\\
&ssn \mapsto nnn/nsn\ \text{or}\ sns/sss\\
&sss \mapsto nnn/nnn\ \text{or}\ nsn/nsn\ \text{or}\ sns/sns\ \text{or}\
sss/sss.
\end{align*}
It follows that 
\[M_1 = 
\begin{bmatrix}
1 & 0 & 0 & 0 & 0 & 0 & 1/4 & 1/4 \\
0 & 0 & 0 & 0 & 1/4 & 1/4 & 0 & 0 \\
0 & 1 & 0 & 0 & 0 & 0 & 1/4 & 1/4 \\
0 & 0 & 0 & 0 & 1/4 & 1/4 & 0 & 0 \\
0 & 0 & 0 & 0 & 1/4 & 1/4 & 0 & 0 \\
0 & 0 & 1 & 0 & 0 & 0 & 1/4 & 1/4 \\
0 & 0 & 0 & 0 & 1/4 & 1/4 & 0 & 0 \\
0 & 0 & 0 & 1 & 0 & 0 & 1/4 & 1/4 
\end{bmatrix}
\]
The new phenomenon is that the following was observed.
\begin{observation}\label{Observation}
Let $q \equiv 5 \pmod 8$.
Let $f(x)=x^2+i\in \F_q[x] $, where $i$ is a
square root of $-1$.
Then, the following 2-step transitions never occur:
\begin{align*}
&nsn \mapsto nns/snn\\
&nss \mapsto nnn/nnn\\
&nss \mapsto sns/sns.\\
\end{align*}
\end{observation}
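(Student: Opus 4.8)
The plan is to translate each ``digit'' of a type into a squareness condition on a field norm, reduce all three forbidden transitions to a single assertion about one digit of one factor, and then settle that assertion by an explicit norm computation that collapses because of the special arithmetic of the point $i-1$. I would first record the dictionary coming from $f(x)=x^2+i$: if $p$ is a monic irreducible factor with root $\rho$, then $p\circ f=\prod_j\big(x^2-(\rho^{q^j}-i)\big)$, so $p\circ f$ splits over $\F_q$ exactly when $\rho-i$ is a square in $\F_q(\rho)$; since an element is a square in $\F_q(\rho)$ iff its norm to $\F_q$ is a square, and $N(\rho-i)=p(i)$ for even degree, this matches the first digit of $p$. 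In general the $k$th digit of $p$ is $s$ iff $p$ is a square at the $k$th orbit point. The two allowable factorizations of $g\circ f=h_1h_2$ in each forbidden case are distinguished precisely by the \emph{second} digit of $h_1$: for the child type $sns$ the pairs $nns/snn$ and $nss/ssn$ have second digits $n$ and $s$, and for the child type $sss$ the pairs $nnn/nnn,\ sns/sns$ versus $nsn/nsn,\ sss/sss$ split the same way (the two factors share a second digit because $h_1(i-1)h_2(i-1)=(g\circ f)(i-1)=g(-i)=g_0(i-1)$, a square). Hence all three parts of the Observation reduce to the single claim that the second digit of $h_1$ is always $s$, i.e. $h_1(i-1)$ is a square in $\F_q$.

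Next I would fix the fields. The grandparent $g_0$ (type $nsn$ or $nss$) begins with $n$, so $g=g_0\circ f$ is irreducible with a root $\theta$ satisfying $\theta^2=\theta_0-i$, where $\theta_0$ is a root of $g_0$; a root of the splitting $g\circ f=h_1h_2$ is $\beta$ with $\beta^2=\theta-i$. Setting $K=\F_q(\theta_0)$ and $L=\F_q(\theta)=\F_q(\beta)$ gives a tower $\F_q\subset K\subset L$ with $[L:K]=2$ and nontrivial automorphism $\sigma:\theta\mapsto-\theta$. As $h_1$ is the minimal polynomial of $\beta$, we have $h_1(i-1)=N_{L/\F_q}((i-1)-\beta)$, and by transitivity of the norm together with the norm criterion, the claim ``$h_1(i-1)$ is a square in $\F_q$'' is equivalent to ``$A:=N_{L/K}((i-1)-\beta)$ is a square in $K$.''

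Then comes the computational heart. Writing $\tau=\mathrm{Tr}_{L/K}(\beta)$ and $\nu=N_{L/K}(\beta)$ (both in $K$), I would compute $A=(i-1)^2-(i-1)\tau+\nu$, use $\sigma(\beta^2)=-\theta-i$ to get $\tau^2=\beta^2+\sigma(\beta^2)+2\nu=-2i+2\nu$, and substitute $\nu=\tfrac{\tau^2}{2}+i$. The key arithmetic is that $(i-1)^2=-2i$ supplies exactly the constant needed to complete the square:
\[ A=\tfrac{\tau^2}{2}-(i-1)\tau-i=\tfrac12\big(\tau^2-2(i-1)\tau+(i-1)^2\big)=\tfrac12\,(\tau-(i-1))^2. \]
Thus $A$ is a square in $K$ iff $2$ is a square in $K=\F_{q^{d_0}}$, where $d_0=\deg g_0$. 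To finish I would prove $d_0$ is even: any root $\rho\in\F_{q^m}$ of an iterate $f^n$ ($n\ge1$) satisfies $f^{n-1}(\rho)\in f^{-1}(0)=\{\pm\sqrt{-i}\}$, and $-i$ is a nonsquare (this is the irreducibility hypothesis $q\equiv5\pmod 8$), so $\pm\sqrt{-i}\in\F_{q^2}\setminus\F_q$; since $f^{n-1}(\rho)\in\F_{q^m}$ this forces $\F_{q^2}\subseteq\F_{q^m}$, hence $2\mid m$. In particular $\F_{q^2}\subseteq K$, where every element of $\F_q^\times$—in particular $2$—is a square. Therefore $A$ is a square, the second digit of $h_1$ is $s$, and the three listed transitions never occur.

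I expect the main obstacle to be the third step: first realizing that the single decisive invariant is the second digit of $h_1$, and then seeing the clean collapse $A=\tfrac12(\tau-(i-1))^2$, which is exactly what isolates the factor of $2$ and makes the parity of $\deg g_0$ decisive. The even-degree lemma is the other indispensable ingredient, since it is what converts the nonsquare $2$ into a square once we pass to $K$.
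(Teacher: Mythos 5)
Your argument is correct, and it is worth stressing that the paper itself contains no proof of this Observation: it is presented as an empirical finding, and Section 6 says explicitly that Conjecture 6.1 is ``what would establish that those 2-step transitions are forbidden,'' supporting it only by the coefficient computation of Example 6.2, which treats the single case where the even polynomial $g=g_0\circ f$ is a quartic. So you are supplying a complete argument where the paper offers a conjecture plus a lowest-degree sketch. Both routes pivot on the same point --- reducing everything to the assertion that each factor of $g(f(x))$ is a square at the second orbit point $\alpha=i-1$, whose key property is $\alpha^2=-2i=-2c$ --- but they diverge in execution. Example 6.2 writes $g(x^2+c)=h(x)h(-x)$ for quartic $g$, compares coefficients, and exhibits $h(\alpha)$ as the literal square $(-2c+r/p+p\alpha/2)^2$ in $\F_q$; that computation does not visibly extend beyond degree $4$. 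You instead pass to the tower $\F_q\subset K=\F_q(\theta_0)\subset L=\F_q(\beta)$ and prove the degree-free identity
\[
N_{L/K}\bigl((i-1)-\beta\bigr)=(i-1)^2-(i-1)\tau+\nu=\tfrac12\bigl(\tau-(i-1)\bigr)^2,
\]
the price being the stray factor $\tfrac12$, which you pay for with the observation that $\deg g_0$ is even (every root of an iterate generates a field containing $\F_{q^2}$, since $-i$ is a nonsquare exactly when $q\equiv 5\pmod 8$, and $2$ is a square in $\F_{q^2}$). I checked the three places where this could break: the reduction of all three forbidden transitions to ``the second digit of each factor is $s$'' matches the list of allowable pairs for $sns$ and $sss$ in Example 3.1; the completed square follows from $\nu=\tfrac{\tau^2}{2}+i$ and $(i-1)^2=-2i$; and the norm criteria for squareness in the two steps of the tower are applied correctly. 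A pleasant byproduct: since your identity uses only $\alpha^2=-2c$, which holds for every tail-size-one polynomial, the same computation establishes the content of Conjecture 6.1 for arbitrary orbit size (for factors of iterates, which are automatically of even degree), not merely the $o=3$ instance needed for this Observation.
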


In particular, $M_2\neq {M_1}^{2}$. That is to say, there is a
discrepancy between the proposed Markov model and what actually happens. Since we
know which 2-step transitions are forbidden, we explicitly calculate
the discrepancy matrix $A:= M_2-{M_1}^2$.
\[A=
\begin{bmatrix}
  0 & 0 & 0 & -1/4 & 0 & 0 & 0 & 0 \\
  0 & 0 & -1/4 & 0 & 0 & 0 & 0 & 0 \\
  0 & 0 & 0 & 1/4 & 0 & 0 & 0 & 0 \\
  0 & 0 & 1/4 & 0 & 0 & 0 & 0 & 0 \\
  0 & 0 & -1/4 & 0 & 0 & 0 & 0 & 0 \\
  0 & 0 & 0 & -1/4 & 0 & 0 & 0 & 0 \\
  0 & 0 & 1/4 & 0 & 0 & 0 & 0 & 0 \\
  0 & 0 & 0 & 1/4 & 0 & 0 & 0 & 0
\end{bmatrix}
\]
\end{example}
\begin{example}\label{eg2}
The second kind has orbit size 4 and tail size 1. We have
$p_{4,1}(c)=c^6+2c^5+2c^4+2c^3+c^2+1$. Let $c_0$ be a root of $p_{4,1}$ in
some $\F_q$ such that $f(x)=x^2+c_0$ is irreducible.  Again applying Lemma 2.3, the
following
1-step transitions are valid:\\ \\
$nnnn \mapsto nnnn$\\
$nnns \mapsto nnsn$\\
$nnsn \mapsto nsnn$\\
$nnss \mapsto nssn$\\
$nsnn \mapsto snns$\\
$nsns \mapsto snss$\\
$nssn \mapsto ssns$\\
$nsss \mapsto ssss$\\
$snnn \mapsto nnns/sssn$ or $nnss/ssnn$ or $nsns/snsn$ or $nsss/snnn$\\
$snns \mapsto nnns/ssnn$ or $nnss/sssn$ or $nsns/snnn$ or $nsss/snsn$\\
$snsn \mapsto nnns/snsn$ or $nnss/snnn$ or $nsns/sssn$ or $nsss/ssnn$\\
$snss \mapsto nnns/snnn$ or $nnss/snsn$ or $nsns/ssnn$ or $nsss/sssn$\\
$ssnn \mapsto nnnn/nssn$ or $nnsn/nsnn$ or $snns/ssss$ or $snss/ssns$\\
$ssns \mapsto nnnn/nsnn$ or $nnsn/nssn$ or $snns/ssns$ or $snss/ssss$\\
$sssn \mapsto nnnn/nnsn$ or $nsnn/nssn$ or $snns/snss$ or $ssns/ssss$\\
$ssss \mapsto nnnn/nnnn$ or $nnsn/nnsn$ or $nsnn/nsnn$ or $nssn/nssn$ or
$snns/snns$ or $snss/snss$ or $ssns/ssns$ or $ssss/ssss$.\\

It follows that
\[M_1 =  \left( \begin{array}{cccccccccccccccc}

1 & 0 & 0 & 0 & 0 & 0 & 0 & 0 & 0 & 0 & 0 & 0 & 1/8 & 1/8 & 1/8 & 1/8 \\
0 & 0 & 0 & 0 & 0 & 0 & 0 & 0 & 1/8 & 1/8 & 1/8 & 1/8 & 0 & 0 & 0 & 0 \\
0 & 1 & 0 & 0 & 0 & 0 & 0 & 0 & 0 & 0 & 0 & 0 & 1/8 & 1/8 & 1/8 & 1/8 \\
0 & 0 & 0 & 0 & 0 & 0 & 0 & 0 & 1/8 & 1/8 & 1/8 & 1/8 & 0 & 0 & 0 & 0 \\
0 & 0 & 1 & 0 & 0 & 0 & 0 & 0 & 0 & 0 & 0 & 0 & 1/8 & 1/8 & 1/8 & 1/8 \\
0 & 0 & 0 & 0 & 0 & 0 & 0 & 0 & 1/8 & 1/8 & 1/8 & 1/8 & 0 & 0 & 0 & 0 \\
0 & 0 & 0 & 1 & 0 & 0 & 0 & 0 & 0 & 0 & 0 & 0 & 1/8 & 1/8 & 1/8 & 1/8 \\
0 & 0 & 0 & 0 & 0 & 0 & 0 & 0 & 1/8 & 1/8 & 1/8 & 1/8 & 0 & 0 & 0 & 0 \\
0 & 0 & 0 & 0 & 0 & 0 & 0 & 0 & 1/8 & 1/8 & 1/8 & 1/8 & 0 & 0 & 0 & 0 \\
0 & 0 & 0 & 0 & 1 & 0 & 0 & 0 & 0 & 0 & 0 & 0 & 1/8 & 1/8 & 1/8 & 1/8 \\
0 & 0 & 0 & 0 & 0 & 0 & 0 & 0 & 1/8 & 1/8 & 1/8 & 1/8 & 0 & 0 & 0 & 0 \\
0 & 0 & 0 & 0 & 0 & 1 & 0 & 0 & 0 & 0 & 0 & 0 & 1/8 & 1/8 & 1/8 & 1/8 \\
0 & 0 & 0 & 0 & 0 & 0 & 0 & 0 & 1/8 & 1/8 & 1/8 & 1/8 & 0 & 0 & 0 & 0 \\
0 & 0 & 0 & 0 & 0 & 0 & 1 & 0 & 0 & 0 & 0 & 0 & 1/8 & 1/8 & 1/8 & 1/8 \\
0 & 0 & 0 & 0 & 0 & 0 & 0 & 0 & 1/8 & 1/8 & 1/8 & 1/8 & 0 & 0 & 0 & 0 \\
0 & 0 & 0 & 0 & 0 & 0 & 0 & 1 & 0 & 0 & 0 & 0 & 1/8 & 1/8 & 1/8 & 1/8
\end{array} \right)\]

Analogously to the first example, however, we observe that once more 
certain 2-step transitions are forbidden. More precisely, the following is observed:\\
\begin{observation}\label{Observation}
Let $c_0$ be a root of
$p_{4,1}$ in $\F_q$ and $f(x) = x^2+c_0 \in \F_q[x]$ be irreducible. Then the
2-step transitions
given below never occur:\\ \\
$nsnn \mapsto nnns/ssnn$ \\
$nsnn \mapsto nsns/snnn$\\
$nsns \mapsto nnns/snnn$ \\
 $nsns \mapsto nsns/ssnn$\\
$nssn \mapsto nnnn/nsnn$ \\
 $nssn \mapsto snns/ssns$\\
$nsss \mapsto nnnn/nnnn$ \\
$nsss \mapsto nsnn/nsnn$ \\
$nsss \mapsto snns/snns$ \\
$nsss \mapsto ssns/ssns$.\\
\end{observation}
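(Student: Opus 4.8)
The plan is to show that the two listed transitions fail for each of the four source types for one uniform reason, and that this reason reduces to a single square-class identity in a finite field. First I would reduce the combinatorial statement. Each source type $nsnn,nsns,nssn,nsss$ begins with $n$, so its $1$-step child is forced; applying the shift shows the child is respectively $snns,snss,ssns,ssss$, each beginning with $s$ and each whose shift has third digit $s$. Hence any pair of grandchildren $\{h_1,h_2\}$ satisfies $d_3\oplus e_3=s$, i.e. $d_3=e_3$. Comparing with the lists, the forbidden pairs are exactly those with $d_3=e_3=n$ and the surviving pairs are exactly those with $d_3=e_3=s$. Thus the Observation is equivalent to the single claim that both grandchildren are squares at the third orbit point $P_3:=f^3(0)$, that is $h_j(P_3)\in(\F_q^\ast)^2$ for $j=1,2$.

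Next I would record the arithmetic special to tail size one. Since $o=4$ and $t=1$, the point $P_2:=f^2(0)$ has the two preimages $\pm c_0$ under $f$, and the cycle forces $f^4(0)$ to be the preimage distinct from $c_0$; hence $f^4(0)=-c_0$, which gives $P_3^2=-2c_0$ together with $P_2=c_0^2+c_0$. I would then set up the tower $\F_q\subset K:=\F_q(\theta)=\F_{q^{2e}}\subset L:=K(\beta)=\F_{q^{4e}}$, where $g_0$ has even degree $2e$ with root $\theta$, where $\beta^2=\theta-c_0$ is a root of $g_1=g_0\circ f$, and where a grandchild root $\delta$ satisfies $\delta^2=\beta-c_0$. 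The existence of two grandchildren (the intermediate type begins with $s$, equivalently the second digit of $g_0$ is $s$) is exactly the statement that $N_{L/K}(\beta-c_0)=P_2-\theta$ is a square in $K$; in particular $\delta\in L$, and $h_1$ is the minimal polynomial of $\delta$ over $\F_q$ (of degree $4e=[L:\F_q]$), so $h_1(P_3)=N_{L/\F_q}(P_3-\delta)$.

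The heart of the argument is to show $P_3-\delta$ is a square in $L$. Here I would use repeatedly that for a quadratic extension $E'/E$ of finite fields of odd characteristic an element is a square in $E'$ iff its norm to $E$ is a square in $E$; this reduces the goal to showing $N_{L/K}(P_3-\delta)$ is a square in $K$. Writing $\delta=a+b\beta$ with $a,b\in K$, the relation $\delta^2=\beta-c_0$ gives $a^2+b^2(\theta-c_0)=-c_0$, and a direct expansion yields
\[
N_{L/K}(P_3-\delta)=(P_3-a)^2-b^2(\theta-c_0)=(P_3-a)^2+a^2+c_0=2a^2-2aP_3-c_0,
\]
where the last step uses $P_3^2=-2c_0$. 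The punchline is that this equals $(\sqrt{2}\,a+\sqrt{-c_0})^2$ once the square roots are chosen so that $\sqrt{2}\sqrt{-c_0}=-P_3$, and since every element of $\F_q$ is a square in $\F_{q^2}\subseteq K$, both $\sqrt{2}$ and $\sqrt{-c_0}$ lie in $K$. Thus $N_{L/K}(P_3-\delta)$ is a square in $K$, so $h_1(P_3)$ is a square in $\F_q$; because $h_1(P_3)h_2(P_3)=g_0(f^2(P_3))=g_0(P_2)$ is a square (the second digit of $g_0$ is $s$), $h_2(P_3)$ is a square as well, giving $d_3=e_3=s$ and hence the Observation.

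I expect the main obstacle to be producing the perfect-square collapse in the displayed computation: the identity works precisely because the tail-size-one relation $P_3^2=-2c_0$ matches the constants $\sqrt{2}^{\,2}=2$ and $\sqrt{-c_0}^{\,2}=-c_0$ via $2\cdot(-c_0)=P_3^2$, so that the cross term lands on $-2aP_3$. For a polynomial whose tail size is not one this coincidence disappears and no constraint is forced, consistent with the paper's theme that tail size one is exceptional. A secondary bookkeeping point, which I would check but expect to be routine, is that the coordinate $a$ of $\delta=a+b\beta$ genuinely lies in $K$ (immediate from $\delta\in L$), and that the four source types are handled simultaneously because the computation uses only that the second digit of $g_0$ is $s$ together with the tail relation, not the remaining digits.
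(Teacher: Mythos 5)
Your argument is correct, and it is worth emphasizing that the paper does not actually prove this Observation: it is presented as an empirical finding, with the underlying general claim deferred to Conjecture 6.1 and supported only by Example 6.2, which does a coefficient comparison in the single case where the intermediate even polynomial $g(f(x))$ has degree $4$ (i.e.\ the source polynomial is quadratic), exhibiting $h(\alpha)$ as the explicit square $(-2c+r/p+p\alpha/2)^2$. Your combinatorial reduction checks out: since the wrap-around position is $t+1=2$, each source type beginning with $ns$ has a unique child whose type ends in $s$, so the two grandchildren automatically agree in the third digit, and the ten listed forbidden pairs are exactly the allowable pairs whose common third digit is $n$; hence the Observation is equivalent to both grandchildren having an $s$ at $P_3=f^3(0)$. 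Your norm computation then proves this for arbitrary even degree $2e$: the identity $N_{L/K}(P_3-\delta)=2a^2-2aP_3-c_0$ collapses to $2(a-P_3/2)^2$ precisely because the tail-size-one relation $P_3^2=-2c_0$ kills the discriminant $4P_3^2+8c_0$, and $2$ is a square in $\F_{q^2}\subseteq K$ since $[K:\F_q]=2e$ is even; the norm criterion for squares in extensions of finite fields of odd characteristic and the fact that $h_1(P_3)h_2(P_3)=g_0(f^5(0))=g_0(P_2)$ is a nonzero square finish the job. So your proposal is not merely a different route --- it is a complete proof of something the paper only observes, and it in fact establishes the orbit-size-$4$ case of Conjecture 6.1 when that conjecture is read (as Example 6.2 makes clear it must be) as a statement about the factors of $g(f^2(x))$ for $g$ of type beginning $ns$. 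The only small points worth writing out are that every factor of an iterate has even degree, so the setup applies, and that $h_1(P_3)\neq 0$ because $\delta$ generates $L$ of degree $4e>1$ over $\F_q$ while $P_3\in\F_q$.
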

By the same reasoning as in Example 3.1, we can explicitly calculate
the discrepancy
 matrix $A:= M_2-{M_1}^2$.
\[A= \left( \begin{array}{cccccccccccccccc}
 0 & 0 & 0 & 0 & 0 & 0 & -1/8 & -1/8  & 0 & 0 & 0 & 0 & 0 & 0 & 0 & 0 \\
 0 & 0 & 0 & 0 & -1/8 & -1/8 & 0 & 0  & 0 & 0 & 0 & 0 & 0 & 0 & 0 & 0 \\
 0 & 0 & 0 & 0 & 0 & 0 & 1/8 & 1/8  & 0 & 0 & 0 & 0 & 0 & 0 & 0 & 0 \\
 0 & 0 & 0 & 0 & 1/8 & 1/8 & 0 & 0  & 0 & 0 & 0 & 0 & 0 & 0 & 0 & 0 \\
 0 & 0 & 0 & 0 & 0 & 0 & -1/8 & -1/8  & 0 & 0 & 0 & 0 & 0 & 0 & 0 & 0 \\
 0 & 0 & 0 & 0 & -1/8 & -1/8 & 0 & 0  & 0 & 0 & 0 & 0 & 0 & 0 & 0 & 0 \\
 0 & 0 & 0 & 0 & 0 & 0 & 1/8 & 1/8  & 0 & 0 & 0 & 0 & 0 & 0 & 0 & 0 \\
 0 & 0 & 0 & 0 & 1/8 & 1/8 & 0 & 0  & 0 & 0 & 0 & 0 & 0 & 0 & 0 & 0 \\
 0 & 0 & 0 & 0 & -1/8 & -1/8 & 0 & 0  & 0 & 0 & 0 & 0 & 0 & 0 & 0 & 0 \\
 0 & 0 & 0 & 0 & 0 & 0 & -1/8 & -1/8  & 0 & 0 & 0 & 0 & 0 & 0 & 0 & 0 \\
 0 & 0 & 0 & 0 & 1/8 & 1/8 & 0 & 0  & 0 & 0 & 0 & 0 & 0 & 0 & 0 & 0 \\
 0 & 0 & 0 & 0 & 0 & 0 & 1/8 & 1/8  & 0 & 0 & 0 & 0 & 0 & 0 & 0 & 0 \\
 0 & 0 & 0 & 0 & -1/8 & -1/8 & 0 & 0  & 0 & 0 & 0 & 0 & 0 & 0 & 0 & 0 \\
 0 & 0 & 0 & 0 & 0 & 0 & -1/8 & -1/8  & 0 & 0 & 0 & 0 & 0 & 0 & 0 & 0 \\
 0 & 0 & 0 & 0 & 1/8 & 1/8 & 0 & 0  & 0 & 0 & 0 & 0 & 0 & 0 & 0 & 0 \\
 0 & 0 & 0 & 0 & 0 & 0 & 1/8 & 1/8  & 0 & 0 & 0 & 0 & 0 & 0 & 0 & 0
\end{array} \right)\]
\end{example}
\begin{example}\label{eg3}
Lastly, we consider examples with orbit size 3 and tail size 2. In this
case, the difference polynomial $p_{3,2}(c)=c^3+2c^2+2c+2$. Using 
Lemma 2.3, the 1-step transitions are
as given below:
\begin{align*}
 &nnn \mapsto nnn\\
&nns \mapsto nss\\
&nsn \mapsto sns\\
&nss \mapsto sss\\
&snn \mapsto nsn/sns\ \text{or}\ nns/ssn\\
&sns \mapsto nnn/snn\ \text{or}\ sss/nss\\
&ssn \mapsto nns/nsn\ \text{or}\ sns/ssn\\
&sss \mapsto nnn/nnn\ \text{or}\ nss/nss\ \text{or}\ snn/snn\ \text{or}\
sss/sss.
\end{align*}

It follows that
\[M_1 = 
\begin{bmatrix}
1 & 0 & 0 & 0 & 0 & 1/4 & 0 & 1/4 \\ 
0 & 0 & 0
& 0 & 1/4
& 0 & 1/4 & 0  \\
 0 & 0 & 0 & 0 & 1/4 & 0 & 1/4 & 0  \\ 
 0 & 1 & 0 & 0 & 0 & 1/4 & 0 &
1/4 \\
 0 & 0 & 1 & 0 & 0 & 1/4 & 0 & 1/4 \\  
 0 & 0 & 0 & 0 & 1/4 & 0 & 1/4 &
0  \\
 0 & 0 & 0 & 0 & 1/4 & 0 & 1/4 & 0  \\  
 0 & 0 & 0 & 1 & 0 & 1/4 & 0 & 1/4
 \end{bmatrix}
\]

We observe, however, that certain 3-step transitions never arise.

\begin{observation}\label{Observation}
Let $c_1$ be a root of
$p_{3,2}$ in $\F_q$.
Let $f(x) = x^2+c_1\in \F_q[x]$ be irreducible.
Then the following 3-step
transitions do not occur:
\begin{align*}
 &nns \mapsto nss \mapsto sss \mapsto nss/nss \\
&nns \mapsto nss \mapsto sss \mapsto snn/snn.\\
\end{align*}
\end{observation}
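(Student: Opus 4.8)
The plan is to reduce the statement to one character identity and then establish that identity by descending through the quadratic tower attached to the orbit. Write $c=c_1$, so $\beta_1=c$, $\beta_2=c^2+c$, and $\beta_3$ is the fixed point ($\beta_3^2-\beta_3+c=0$); I use the quadratic character $\chi_{q^k}$ of $\F_{q^k}$ (with $s\leftrightarrow+1$, $n\leftrightarrow-1$), the compatibility $\chi_{q^k}=\chi_{q^{k/2}}\circ N_{\F_{q^k}/\F_{q^{k/2}}}$, and the fact that every element of a subfield of even index is automatically a square. First, the first two of the three steps are forced: starting from an even-degree irreducible $g$ of type $nns$, since $nns$ begins with $n$, $g\circ f$ is irreducible of type $f(nns)=nss$, and since that begins with $n$, $g_2:=g\circ f^2$ is irreducible of type $f(nss)=sss$. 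As $sss$ begins with $s$, $g_2\circ f=g\circ f^3$ splits as $h_1h_2$, and by Lemma~2.3 (with $k=t=2$) together with the product type $f(sss)=sss$, the two factors satisfy $d_i=e_i$ for all $i$ and $d_3=e_2$, hence $d_2=d_3$. The four Markov-allowable common types are then $nnn,nss,snn,sss$, and the two forbidden ones are exactly those with $d_1\neq d_2$; so it suffices to prove $d_1=d_2$.

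Next I would pass to characters. Let $D=\deg g_2$, let $\rho\in\F_{q^D}$ be a root of $h_1$, and set $\theta=f(\rho)$ (a root of $g_2$), $\theta_2=f(\theta)\in\F_{q^{D/2}}$ (a root of $g_1$), $\theta_3=f(\theta_2)\in\F_{q^{D/4}}$ (a root of $g$). Taking $h_1$ monic, $d_i=\chi_q(h_1(\beta_i))=\chi_{q^D}(\beta_i-\rho)$. Writing $\psi=\mathrm{Frob}^{D/2}$ for the generator of $\mathrm{Gal}(\F_{q^D}/\F_{q^{D/2}})$, the relation $\theta_2=\theta^2+c\in\F_{q^{D/2}}$ forces $\psi(\theta)=-\theta$, and solving $\rho^2=\theta-c$ inside $\F_{q^{D/2}}(\theta)$ yields the clean parametrization $\rho=\tfrac{S}{2}+\tfrac{\theta}{S}$ with $S=\rho+\psi(\rho)\in\F_{q^{D/2}}$. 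Substituting this and collapsing the norm via $\psi(\theta)=-\theta$ gives
\[
d_\kappa=\chi_{q^{D/2}}\!\big(P+\kappa^2-\kappa S\big),\qquad \kappa\in\{\beta_1,\beta_2\},
\]
where $P=\rho\,\psi(\rho)\in\F_{q^{D/2}}$ satisfies $P^2=\beta_2-\theta_2$ and $S^2=2(P-c)$. Thus $d_1d_2=\chi_{q^{D/2}}\big((P+\beta_1^2-\beta_1 S)(P+\beta_2^2-\beta_2 S)\big)$, and the entire claim becomes the assertion that this element of $\F_{q^{D/2}}$ is a square.

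The ingredient that is invisible to $g_2$ alone, and that I expect to force the square, is the type of the \emph{original} $g$. Using the identities $\beta_1-f(z)=-z^2$ and $\beta_2-f(z)=\beta_1^2-z^2$ (immediate from $\beta_1=c$), the three digits of $g$ translate into $\chi_{q^{D/4}}(-\theta_2^2)$, $\chi_{q^{D/4}}(c^2-\theta_2^2)$, $\chi_{q^{D/4}}(\beta_3^2-\theta_2^2)$; the type being $nns$ gives in particular the middle-digit constraint $\chi_{q^{D/4}}(c^2-\theta_2^2)=-1$. I would then descend the displayed product one further level, computing its norm $N_{\F_{q^{D/2}}/\F_{q^{D/4}}}$ (under $\psi_2:\theta_2\mapsto-\theta_2$) in terms of the symmetric functions $P\psi_2(P)$, $P+\psi_2(P)$, $S\psi_2(S)$, $S+\psi_2(S)$, all of which lie in $\F_{q^{D/4}}$ and are controlled by $P^2=\beta_2-\theta_2$, $S^2=2(P-c)$ and their $\psi_2$-conjugates $\psi_2(P)^2=\beta_2+\theta_2$, $\psi_2(S)^2=2(\psi_2(P)-c)$. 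The aim is to show this $\F_{q^{D/4}}$-norm is a square, which I expect to come down precisely to the middle-digit constraint $\chi_{q^{D/4}}(c^2-\theta_2^2)=-1$, whence $d_1d_2=+1$ and the two transitions are excluded.

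The hard part will be this final descent. The essential difficulty is that every \emph{symmetric} manipulation — telescoping via $f(X)-f(Y)=(X-Y)(X+Y)$ — only ever reproduces the products $d_ie_i$, i.e.\ the already-known digits of $g_2$, and so gives no purchase on the individual digit. Separating $d_1$ from $d_2$ necessarily breaks the $\rho\leftrightarrow-\rho$ symmetry and drives the computation down the nested quadratic tower $\F_{q^{D/4}}\subset\F_{q^{D/2}}\subset\F_{q^D}$, where one must control the doubly-nested radicals $P=\sqrt{\beta_2-\theta_2}$ and $S=\sqrt{2(P-c)}$ and their $\psi_2$-conjugates simultaneously. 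Verifying that the resulting norm is a square exactly because $g$ has middle digit $n$ is the technical heart, and it is precisely this three-step correlation between the first two digits — entirely absent from any single-step transition rule — that accounts for the breakdown of the Markov model and the vanishing of $nss/nss$ and $snn/snn$.
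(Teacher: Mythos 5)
There is a genuine gap, and you acknowledge it yourself: the argument is never finished. Your first paragraph is correct and complete — starting from type $nns$ the first two steps are forced ($nns\mapsto nss\mapsto sss$), and at the third step Lemma~2.3 with $o=3$, $k=t=2$ together with the product type $f(sss)=sss$ gives $d_i=e_i$ for all $i$ and $d_3=e_2=d_2$, so the observation is exactly the assertion $d_1=d_2$. Your character set-up also checks out as far as it goes: with $\rho$ a root of $h_1$ generating $\F_{q^D}$, $\theta=f(\rho)$, $\psi(\theta)=-\theta$, $P=\rho\psi(\rho)$, $S=\rho+\psi(\rho)$, one does get $P^2=\beta_2-\theta_2$, $S^2=2(P-c)$, and $d_\kappa=\chi_{q^{D/2}}(P+\kappa^2-\kappa S)$. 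But the entire mathematical content of the statement is concentrated in the step you defer: showing that $N_{\F_{q^{D/2}}/\F_{q^{D/4}}}\bigl((P+\beta_1^2-\beta_1S)(P+\beta_2^2-\beta_2S)\bigr)$ is a square in $\F_{q^{D/4}}$, and that this is forced by the middle digit of $g$ being $n$. You write that you ``expect'' this to come down to the constraint $\chi_{q^{D/4}}(c^2-\theta_2^2)=-1$; that is a conjecture inside a proof, not a proof. It is not even established that the middle-digit constraint is the right input (the hypothesis on $g$ is the full type $nns$, and a completed computation might need the first or third digit, or some combination, or the specific relation $p_{3,2}(c)=0$), nor that the norm computation closes up into an expression whose character is determined by the digits of $g$ at all.

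For calibration: the paper itself offers no proof of this statement. It is labelled an Observation, is supported only by the computational data of Table~3 and the discrepancy matrix in Example~3.3, and the only partial argument anywhere in the paper (Example~6.2) concerns the tail-size-one Conjecture~6.1, not the $(o,t)=(3,2)$ case. So your plan, if the final descent were actually carried out, would constitute new mathematics beyond the paper rather than a reproof. As it stands, however, what you have is a correct reduction plus a plausible but unexecuted strategy, and the statement remains unproved.
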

It follows that the discrepancy matrix A, which this time is $M_3-M_{1}^3$, is:\\
\[A= \left( \begin{array}{cccccccc}
 0 & 1/4 & 0 & 0 & 0 & 0 & 0 & 0 \\
 0 & 0 & 0 & 0 & 0 & 0 & 0 & 0 \\
 0 & 0 & 0 & 0 & 0 & 0 & 0 & 0 \\
 0 & -1/4 & 0 & 0 & 0 & 0 & 0 & 0 \\
 0 & -1/4 & 0 & 0 & 0 & 0 & 0 & 0 \\
 0 & 0 & 0 & 0 & 0 & 0 & 0 & 0 \\
 0 & 0 & 0 & 0 & 0 & 0 & 0 & 0  \\
 0 & 1/4 & 0 & 0 & 0 & 0 & 0 & 0   \end{array} \right)\]
\end{example}
\section{New Model}
The investigations in the previous section show
that a Markov model does not always fit the factorization process for
iterates of quadratic polynomials. We need a new model
to explain the process and we propose the following.\\

Let $a-1$ and $b$ be the tail and orbit sizes of an irreducible quadratic
polynomial
$f$ defined over $\F_q$, respectively. Then the $m$-step
transition matrices associated to $f$ satisfy the following recurrence
relation:
\begin{align}
M _{m+a} = M _{m+a-1}B + M_ mA  \label{eq:recurrence}
\end{align}
where $M_{-a+1}=\cdots=M_{-2}=M_{-1}=0$, $M_0=I$.\\
\begin{corollary}\label{corollary}
The following hold for the new model:
\begin{itemize}
\item[(i)] $B=M_1$.
\item[(ii)] $M_i={M_1}^i$ for $i=1,2,\cdots , a-1$.
\item[(iii)] $A= M_a-{M_1}^a$.
\end{itemize}
\end{corollary}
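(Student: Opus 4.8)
The plan is to treat the recurrence \eqref{eq:recurrence} as a linear matrix recursion and simply unroll it, exploiting the fact that the initial conditions $M_{-a+1}=\cdots=M_{-1}=0$, $M_0=I$ are rigged precisely so that the $A$-term is annihilated until the index first reaches $a$. The three assertions then fall out in the order (i), (ii), (iii), each feeding the next, so I would prove them in that sequence.

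First I would establish (i) by instantiating \eqref{eq:recurrence} at the smallest value of $m$ that produces a non-initial matrix, namely $m=-a+1$. There $M_{m+a}=M_1$, $M_{m+a-1}=M_0=I$, and $M_m=M_{-a+1}=0$, so the recurrence collapses to $M_1 = I\cdot B + 0\cdot A = B$, giving $B=M_1$.

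The key bookkeeping observation for (ii) is that whenever $1\le i\le a-1$, the lag index $i-a$ satisfies $-(a-1)\le i-a\le -1$, which is exactly the block of vanishing matrices; hence $M_{i-a}=0$. Instantiating \eqref{eq:recurrence} at $m=i-a$ (so that $m+a=i$) therefore yields $M_i=M_{i-1}B$ for every such $i$. Telescoping this relation down to $M_0=I$ gives $M_i=M_{i-1}B=\cdots=M_0 B^i=B^i$, and substituting $B=M_1$ from (i) produces $M_i=M_1^{\,i}$ for $1\le i\le a-1$, which is (ii). For (iii) I would then instantiate \eqref{eq:recurrence} at $m=0$: now $M_m=M_0=I$ no longer vanishes, so the $A$-term survives and $M_a=M_{a-1}B+A$. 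Applying (ii) at $i=a-1$ together with (i) gives $M_{a-1}B=M_1^{\,a-1}M_1=M_1^{\,a}$, whence $A=M_a-M_1^{\,a}$.

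I expect no serious obstacle here: the substance is purely the arithmetic of index ranges, and the only thing to get right is tracking precisely which lag indices fall inside the zero-block $\{-(a-1),\dots,-1\}$ versus the boundary index $0$, where $M_0=I$ reactivates the $A$-term and thereby isolates the discrepancy matrix. The genuine content of the model lies in justifying the recurrence \eqref{eq:recurrence} itself; granting that recurrence and the stated initial conditions, the corollary is a direct unrolling, and I would present it as such.
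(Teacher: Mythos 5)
Your proposal is correct and matches the paper's own argument: part (i) by setting $m=-a+1$, part (ii) by using the vanishing of the lagged term $M_{i-a}$ for $1\le i\le a-1$ (your telescoping is just the paper's induction unrolled), and part (iii) by setting $m=0$. No gaps; this is essentially the same proof.
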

\begin{proof}\label{proof}
\begin{itemize}
\item[(i)] Setting $m=-a+1$ in $(1)$ gives the result.
\item[(ii)]We prove this by induction. Assume
$M_k={M_1}^k$ is true for an integer $1\leq k<a-1$. 
Setting $m=k-a+1$ in $(1)$ gives $M_{k+1}=M_kB+M_{k-a+1}A$.
Since $M_{k-a+1}=0$, $M_{k+1}=M_kB=M_kM_1$. 
By induction, $M_k={M_1}^k$, which yields $M_{k+1}={M_1}^{k+1}$. 
\item[(iii)] Setting $m=0$ in $(1)$ gives $M_a=M_{a-1}B+M_0A$. From the
previous two parts and the initial conditions, we know $B=M_1$,
$M_{a-1}={M_1}^{a-1}$, and $M_0=I$. Plugging these into (1), the
result follows.
\end{itemize}
\end{proof}
\begin{remark}
The new model with tail size $a-1$ is called an $a$-step Markov model.
\end{remark}
\begin{conj}
The multi-step Markov model given above describes the factorization process for the iterates of an irreducible quadratic
polynomial over a finite field of odd order.
\end{conj}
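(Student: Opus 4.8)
The plan is to realise the factorization process explicitly through the arboreal Galois action attached to $f$, and then to prove that the induced stochastic process on types is a genuine order-$a$ Markov chain with $a=t+1$, whose transition law is encoded by the recurrence $M_{m+a}=M_{m+a-1}B+M_mA$. First I would fix the Galois-theoretic dictionary: the irreducible even-degree factors of $f^n(x)$ correspond to orbits of $\mathrm{Gal}(\overline{\F_q}/\F_q)$ acting on the level-$n$ preimage tree of a fixed root, and the type of such a factor records, coordinate by coordinate, whether the factor takes a square value at each point of the critical orbit $\mathcal{O}=\{\beta_1,\dots,\beta_o\}$. Passing from level $n$ to level $n+1$ replaces $g$ by the factors of $g(f(x))$; since $g(f(\beta_i))=g(\beta_{i+1})$ for $i<o$ and $f(\beta_o)=\beta_{t+1}$, the child types are governed by the left shift of $T$ together with the orbit's feedback at coordinate $t+1$, while the branching into two children (when $T$ begins with $s$) is decided by whether a specific square root lies in $\F_q$, subject to the compatibility of Lemma~\ref{lemma} linking the $o$th and $t$th digits.

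The heart of the argument is to locate the memory of the process. I would make explicit the field element whose squareness selects which of the allowable child pairs actually occurs; in the $f(x)=x^2+c$ normalisation this is a product (equivalently a ratio) of critical-orbit values $f^j(\gamma)$, and the eventual periodicity of the critical orbit forces these deciding quantities to satisfy multiplicative relations with period $o-t$. The key claim to establish is that, because the feedback reinserts coordinate $t+1$, the squareness datum consumed at level $m+t+1$ reuses a critical value already committed at level $m$, so the transition at step $m+a$ is conditionally independent of the entire past \emph{given} the states at steps $m+a-1$ and $m$. Quantifying this dependence is exactly what should produce the two-term recurrence: the $B$-term is the free shift-and-branch contribution, which by Corollary~\ref{corollary}(i) must equal $M_1$, and the $A$-term is the correction coming from the reused critical value, forced by Corollary~\ref{corollary}(iii) to equal $M_a-{M_1}^a$.

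With the order-$a$ structure in hand, the remaining step is to prove the equidistribution hypothesis built into Definition~\ref{def8}: conditioned on an allowable length-$a$ window of types, every allowable continuation occurs with equal probability in the large-$n$ limit. I would deduce this from a Chebotarev-type density statement for the tower of iterated extensions $\F_q(f^{-n}(\beta))$, realising the transition probabilities as proportions of Frobenius conjugacy classes in the relevant Galois groups and letting $n\to\infty$.

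The main obstacle is precisely this last point: controlling the long-range correlations among the squareness conditions, which a priori could be dictated by the full image of the arboreal Galois representation rather than by a window of length $a$. Proving the conjecture therefore reduces to determining this image for eventually periodic post-critical orbits and showing it is as large as possible subject only to the tail constraint of Lemma~\ref{lemma}; establishing this maximality, and hence that no correlations survive beyond lag $t+1$, is the crux, and is expected to require a delicate group-theoretic analysis of the iterated wreath product together with the relations imposed by the periodic part of the critical orbit.
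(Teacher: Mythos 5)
There is no proof of this statement in the paper, and your proposal does not supply one either. The statement is Conjecture 4.3: the authors offer only the numerical evidence of Section 5 (degree-weighted type proportions for iterates of $x^2+2$ over $\F_5$, $x^2+3$ over $\F_{11}$, $x^2+1$ over $\F_7$, compared against the eigenvector of the block matrix $T$) and the heuristic algebraic computation in Example 6.2. Your text is a research plan whose final paragraph concedes the decisive step --- showing that no correlations among the squareness conditions survive beyond lag $t+1$, via a maximality statement for the arboreal image --- is open and ``expected to require a delicate group-theoretic analysis.'' Reducing one open conjecture to another open (and, in the arboreal-image setting, notoriously difficult) problem is not a proof, so the central gap is simply that the crux is asserted rather than established.

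Two more concrete problems with the plan as stated. First, the equidistribution step via ``a Chebotarev-type density statement'' does not obviously make sense here: for a fixed $f$ over a fixed $\F_q$, the Galois group of the splitting field of $f^n$ is cyclic, generated by a single Frobenius, and the factorization of $f^n$ is the deterministic orbit structure of that one element; there is no family of Frobenius classes over which to average. The probabilities in Definition 2.8 are degree-weighted proportions of factors of a single iterate, so any equidistribution must come from counting orbits of one element inside the (discriminant-constrained) iterated wreath product, a mechanism you do not supply. Second, your proposed source of the order-$a$ memory --- that the feedback at coordinate $t+1$ reuses a critical value committed $t+1$ steps earlier --- would naively predict a nontrivial correction matrix $A$ whenever $t\geq 1$; but the paper's own computations in Section 6 report \emph{no} missing transitions for $(o,t)=(4,2),(5,2),(4,3),(5,3)$, and Conjecture 6.3 asserts the plain Markov model holds outside $\{(m,1)\}\cup\{(3,2)\}$. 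Your mechanism therefore needs to explain why the correction vanishes in those cases, or it is in tension with the very data the conjecture is built on.
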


Of course, this is only approximate at any finite level, but it leads to predictions as regards the
limiting behavior. In particular, the multi-step Markov model predicts that in the limit 100\% of the factorization
of the iterates will be of type $nn \cdots n$ (the unique sink) and also allows us to compute the
limiting relative proportions of the other types as follows.

We fix an arbitrary natural number $m$ and define the vector $v_i$ to be
the vector whose entries are the proportions of all $2^b$ types (lexicographically ordered) for the $(m+i)$th
iterate of the polynomial f.
Say $v=(v_1,v_2,\cdots,v_a)$. Then, using (1), the next such $a$-tuple will, according to the model,
be the vector $(v_2,
v_3,\cdots v_a, Av_1+Bv_a)$. Denoting the associated $a2^b$ by $a2^b$
transition matrix by T, we have 
\begin{align*}
T =
 \begin{pmatrix}
  0 & I  & \cdots & 0 \\
  \vdots &  & \ddots & 0 \\
   \vdots &   &  & I \\
  A &0  &\cdots  & B
 \end{pmatrix}.
\end{align*}

We can thereby interpret this multi-step Markov model as a Markov process on a larger number of states,
with transition matrix $T$. The limiting frequencies of the non-absorbing states are
given, up to scaling, by the entries of an eigenvector of $T$ corresponding to its largest
eigenvalue less than $1$. \cite{seneta}

Combining this fact with the following lemma indicates
how the limiting proportions can be computed:
\begin{lemma}\label{Lemma}
With the notation as above, let $e$ be an eigenvector of the
transition matrix $T$ corresponding to eigenvalue $\lambda$, and $e_1$ be its first $2^b$ entries.
Then $e=(e_1, \lambda e_1, \lambda^2 e_1,\cdots,
\lambda^{k-1} e_1)$
\end{lemma}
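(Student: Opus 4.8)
The plan is to exploit the block structure of the matrix $T$ directly, reading off the eigenvector equation block by block rather than computing the characteristic polynomial. Let me write $a$ for the number of block rows (so $k=a$ in the statement, and the ambient space has dimension $a \cdot 2^b$). I would begin by decomposing the candidate eigenvector into $a$ blocks of size $2^b$, say $e = (e_1, e_2, \ldots, e_a)$, and likewise read the relation $Te = \lambda e$ as a system of $a$ vector equations, one for each block row of $T$.

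The key observation is that all block rows of $T$ except the last are trivial shift rows: the $j$th block row (for $1 \le j \le a-1$) has a single identity block $I$ in column $j+1$ and zeros elsewhere. Hence the $j$th block of the equation $Te = \lambda e$ reads simply $e_{j+1} = \lambda e_j$. This is a one-step recurrence that I would iterate: starting from $e_1$, it forces $e_2 = \lambda e_1$, then $e_3 = \lambda e_2 = \lambda^2 e_1$, and in general $e_{j} = \lambda^{j-1} e_1$ for all $1 \le j \le a$. This is exactly the claimed form $e = (e_1, \lambda e_1, \lambda^2 e_1, \ldots, \lambda^{a-1} e_1)$, so the statement follows immediately from the upper block rows alone, independently of $A$ and $B$.

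The step that requires a little care is making the first-block-row equation precise in the edge case and confirming that the claim as stated is a consequence of only the shift rows. The last block row, which reads $A e_1 + B e_a = \lambda e_a$, is not needed to establish the asserted form of $e$; it instead serves to pin down which $\lambda$ are admissible (substituting $e_a = \lambda^{a-1} e_1$ yields $(A + \lambda^{a-1}B)e_1 = \lambda^a e_1$, i.e. $A e_1 = (\lambda^a I - \lambda^{a-1}B)e_1$, the eigenvalue-selection condition). I would note this explicitly so that the reader sees the division of labor: the shift rows determine the shape of every eigenvector, and the bottom row determines the spectrum.

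The only genuine subtlety, and the main thing I would double-check, is the indexing convention in the block form of $T$: I must verify that the superdiagonal identity blocks are placed so that block row $j$ couples $e_j$ to $e_{j+1}$ (and not, say, $e_{j-1}$), since the direction of the shift is what produces ascending rather than descending powers of $\lambda$. Once that convention is fixed to match the displayed matrix $T$, the argument is a routine back-substitution with no further obstacle; the result holds for every eigenvalue $\lambda$ and every corresponding eigenvector, with no assumption on $A$, $B$, or on $\lambda$ being the dominant eigenvalue.
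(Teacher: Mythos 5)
Your argument is correct, and it is exactly the ``easily directly proven'' route that the paper gestures at but does not write out: the paper's official proof is a one-line citation to Theorem 3.2 of Dennis--Traub--Weber on matrix polynomials, which treats $T$ as the (block) companion matrix of the matrix polynomial $\lambda^a I - \lambda^{a-1}B - A$ and derives the eigenvector structure from that general theory. Your back-substitution through the superdiagonal identity blocks --- reading block row $j$ of $Te=\lambda e$ as $e_{j+1}=\lambda e_j$ for $1\le j\le a-1$ and iterating --- reaches the same conclusion self-containedly, and you have the indexing convention right for the displayed $T$ (identities sit on the superdiagonal, so the shift produces ascending powers of $\lambda$). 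Your explicit separation of roles is also a genuine small improvement in exposition over the citation: the shift rows force the shape $e=(e_1,\lambda e_1,\dots,\lambda^{a-1}e_1)$ for \emph{every} eigenpair with no hypothesis on $A$, $B$, or dominance of $\lambda$, while the bottom row $Ae_1+Be_a=\lambda e_a$ reduces to $(\lambda^a I-\lambda^{a-1}B-A)e_1=0$ and selects the admissible $\lambda$ --- which is precisely how the lemma gets used later to compute the limiting proportions from $e_1$ alone. The only cosmetic point worth noting is that the lemma's $k$ is the $a$ of the recurrence (the number of blocks), as you observed.
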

\begin{proof}\label{proof}
This is a consequence of Theorem 3.2 in \cite{Dennis} (or can be easily directly proven).
\end{proof}
Again with the notation above, consider the eigenvector
$e$ of $T$, corresponding to the largest eigenvalue less than $1$, such that the entries of $e_1$ except the first one sum to 1. 
The entries of $e_1$ are the limiting
proportions of the types that are not $nn\cdots n$.

\section{Data}
In this section, we provide actual data corresponding to examples
3.1, 3.2
and 3.3. In each case, we use the smallest $q$ for which the corresponding
difference polynomial has a root and that yields an irreducible quadratic.
Comparing the limiting proportions predicted by
the new model with the data for each example, we will illustrate
how well the multi-step Markov model fits.
\subsection*{Data for Example 3.1}
\begin{flushleft}
\begin{tabular}{|l|l|l|l|l|l|l|l|}
  \multicolumn{8}{c}{} \\
  \hline
  Iterate & nns & nsn & nss & snn & sns & ssn & sss \\
  \hline
  20 & 0.0251& 0.1748 & 0.1163 &0.0271 &0.2541 &0.1143 &0.2883\\
  21 & 0.0268 &0.1661 & 0.1221 &0.0267 &0.2635 &0.1222 &0.2726\\
  22 & 0.0300 &0.1725 & 0.1253 &0.0271 &0.2487 &0.1282 &0.2681\\
  23 & 0.0256 &0.1689 & 0.1223 &0.0253 &0.2508 &0.1226 &0.2846\\
  24 & 0.0238 &0.1686 & 0.1240 &0.0238 &0.2542 &0.1239 & 0.2817\\
  25 & 0.0276 &0.1669 & 0.1217 &0.0272 &0.2598 &0.1220 & 0.2748\\
  26 & 0.0263 & 0.1699 & 0.1276 &0.0282 &0.2526 &0.1256 &0.2697\\
  27 & 0.0263 & 0.1677 & 0.1237 &0.0269 &0.2502 &0.1231 &0.2821\\
  \hline
\end{tabular}
\end{flushleft}
Table 1: Relative proportions of types (other than $nnn$) for factors of
iterates of
$f(x)=x^2+2 \in \F_5[x]$.\\

By comparison, if we consider the related block matrix 
in the previous section, the first part $e_1$ of an eigenvector for
the eigenvalue $\lambda\approx 0.9333801995$ is
\[
\begin{bmatrix}
-1.0000000000\cdots\\
0.026110931\cdots\\
0.170493119\cdots\\
0.123960675\cdots\\
0.026110931\cdots\\
0.254036800\cdots\\
0.123960675\cdots\\
0.275326866\cdots
\end{bmatrix}
\]
\subsection*{Data for Example 3.2}
\begin{flushleft}
\resizebox{16cm}{!}{
\begin{tabular}{|l|l|l|l|l|l|l|l|l|l|l|l|l|l|l|l|}
  \multicolumn{6}{c}{} \\
  \hline
  Iterate & $nnns$ & $nnsn$ & $nnss$ & $nsnn$ & $nsns$ & $nssn$ & $nsss$ &
$snnn$ & $snns$ & $snsn$ & $snss$ & $ssnn$ & $ssns$ & $sssn$ & $ssss$ \\
  \hline
  21 & 0.0180 &0.0932 & 0.0446 &0.0809 &0.0203 &0.1194 &0.0536& 0.0129
&0.1114 & 0.0501 &0.0845 &0.0230 &0.1227 &0.0505&0.1152\\
  22 & 0.0177 &0.0705 & 0.0483 &0.1086 &0.0187 &0.1039 &0.0483& 0.0137
&0.1021 & 0.0486 &0.0811 &0.0210 &0.1450 &0.0497&0.1228\\
  23 & 0.0178 &0.0816 & 0.0414 &0.0934 &0.0182 &0.1135 &0.0476& 0.0180
&0.1305 & 0.0465 &0.0870 &0.0171 &0.1272 &0.0435&0.1166\\
  24& 0.0232 &0.0804 & 0.0493 &0.1044 &0.0189 &0.0992 &0.0524& 0.0183
&0.1116 & 0.0559 &0.0763 &0.0169 &0.1348 &0.0527&0.1057\\
  25& 0.0190 &0.0859 & 0.0469 &0.1007 &0.0191 &0.1138 &0.0486& 0.0185
&0.1254 & 0.0487 &0.0769 &0.0199 &0.1187 &0.0464&0.1114\\
  26 & 0.0188 &0.0739 & 0.0486 &0.1056 &0.0199 &0.1020 &0.0500& 0.0173
&0.1217 & 0.0493 &0.0776 &0.0194 &0.1332 &0.0514&0.1115\\
  27 & 0.0178 &0.0828 & 0.0497 &0.0963 &0.0189 &0.1107&0.0493& 0.0176
&0.1266 & 0.0505 &0.0792 &0.0186 &0.1218 &0.0489&0.1115\\
 \hline
\end{tabular}
}
\end{flushleft}
Table 2: \small Relative proportions of types (other than $nnnn$) for
factors of iterates of $f(x)=x^2+3 \in \F_{11}[x]$.\\

If we compute the appropriate eigenvector of the related $32$ by $32$
matrix, its first block $e_1$ of size $16$ is
\[
\begin{bmatrix}
 -1.0000000000\cdots\\
0.018669399\cdots\\
0.079050806\cdots\\
0.049246267\cdots\\
0.099196036\cdots\\
0.018669399\cdots\\
0.110198525\cdots\\
0.049246267\cdots\\
0.018669399\cdots\\
0.119717366\cdots\\
0.049246267\cdots\\
0.079050806\cdots\\
0.018669399\cdots\\
0.130925265\cdots\\
0.049246267\cdots\\
0.110198525\cdots
\end{bmatrix}
\]
\subsection*{ Data for Example 3.3}
\begin{flushleft}
\begin{tabular}{|l|l|l|l|l|l|l|l|}
  \multicolumn{8}{c}{} \\
  \hline
  Iterate & nns & nsn & nss & snn & sns & ssn & sss \\
  \hline
  26 & 0.0731 &0.0728 &0.1673 &0.1827 &0.0718 &0.0722 &0.3601\\
  27 & 0.0760 &0.0727 &0.1695 &0.1863 &0.0699 &0.0732 &0.3523\\
  28 & 0.0736 &0.0754 & 0.1798 &0.1734 &0.0747 &0.0729 &0.3502\\
  29 & 0.0654 &0.0761 & 0.1639 &0.1873 &0.0772 &0.0665 &0.3636\\
  30 & 0.0747 &0.0762 & 0.1757 &0.1876 &0.0730 &0.0714 &0.3414\\
  31 &0.0714 &0.0772 & 0.1735 &0.1772 &0.0766 &0.0707 & 0.3535\\
  32 & 0.0715 &0.0713 & 0.1818 &0.1910 &0.0703 &0.0706 & 0.3434\\
  33 & 0.0716 &0.0756  &0.1720  &0.1738 &0.0783 &0.0743 &0.3544\\
  34 & 0.0711 &0.0708 &0.1859  &0.1863 &0.0715 &0.0718 &0.3426\\

  \hline
\end{tabular}
\end{flushleft}
Table 3: Relative proportions of types (other than $nnn$) for factors of
iterates of
$f(x)=x^2+1 \in \F_7[x]$.\\

As mentioned before, in \cite{jones+boston}, Jones and Boston proposed a
Markov process, and they supported this claim
by the example $x^2+1$ over $\F_7$. However, the result  given in Observation 3.3 does not follow this claim.
To illustrate how the multi-step Markov model
fits better, the following table compares the limiting proportions predicted by the Markov model and the multi-step Markov model:
\begin{flushleft}
\begin{tabular}{|l|l|l|l|l|l|l|l|}
  \multicolumn{3}{c}{} \\
  \hline
   Types & Markov model & Multi-step Markov model \\
  \hline
  $nns$ & $0.073573805\cdots$ &$0.071981460\cdots$  \\
  $nsn$ & $0.073573805\cdots$ &$0.071981460\cdots$  \\
  $nss$ & $0.191577027\cdots$ & $0.178322872\cdots$\\
  $snn$ &  $0.191577027\cdots$&$0.178322872\cdots$ \\
  $sns$ & $0.073573805\cdots$ &$0.071981460\cdots$ \\
  $ssn$ &$0.073573805\cdots$ &$0.071981460\cdots$\\
  $sss$ & $0.322550722\cdots$ &$0.355428413\cdots$\\
  \hline
\end{tabular}
\end{flushleft}
Table 4: Limiting proportions of types (other than $nnn$) for factors of
iterates of $x^2+1 \in {\F_7[x]}$ predicted by the Markov model and the multi-step Markov
model.   \\

It is particularly striking how much better the new model fits the data for $sss$.

\section{Conjectures/Speculations}
In this last part, we present some conjectures based on the many
different computational results we have obtained.

In section 3, we observed that for the irreducible quadratic polynomials with
difference polynomials $p_{3,1}$ and $p_{4,1}$, there are
certain missing
2-step transitions. After further investigations with many quadratic
polynomials, we conjecture that the same phenomenon happens for every irreducible quadratic polynomial
with tail size $1$. What would establish that those 2-step transitions are forbidden is the following conjecture.
\begin{conj}\label{conjecture}
Let $f$ be an irreducible quadratic
polynomial over $\F_q$ with tail size $t=1$ and orbit size $o$ and let $g$ be an even irreducible
polynomial over $\F_q$ whose type begins with $ns$. Then, the $(o-1)$th
digit of the type of each irreducible factor of $g(f(x))$ is $s$.
\end{conj}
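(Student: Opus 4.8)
The plan is to translate the entire statement into a question about quadratic residues in the splitting field of $g$ and then to exploit the special geometry of the critical orbit when $t=1$.

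First I would reduce the assertion to a single digit. Since the type of $g$ begins with $n$, the polynomial $g(f(x))$ is irreducible, so $g$ has a unique child; evaluating at the orbit gives $g(f(O_{o-1}))=g(O_o)$, so the $(o-1)$th digit of that child equals the $o$th digit of $g$. Thus the conjecture is equivalent to the assertion that the $o$th digit of $g$ is $s$. Writing $d=\deg g$ (even) and taking $\theta\in\F_{q^d}$ a root of the monic $g$, the identity $\chi_{q^d}=\chi_q\circ N_{\F_{q^d}/\F_q}$ together with $g(O_k)=N_{\F_{q^d}/\F_q}(O_k-\theta)$ shows that the $k$th digit is $s$ exactly when $O_k-\theta$ is a square in $\F_{q^d}$. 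Because $d$ is even, $-1$ is a square in $\F_{q^d}$, so signs may be discarded throughout.

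Second, I would record the fact peculiar to tail size one: since $f(O_1)=O_2=f(O_o)$ with $O_1\neq O_o$, the two preimages of $O_2$ are negatives, whence $O_o=-O_1=-c$, and $f(x)-O_1=x^2$ is a perfect square. Applying Lemma 2.3 with $k=t=1$, the $o$th digit of $g$ equals the first digit of its sibling $g'$; and since $g,g'$ arise by adjoining $\pm\sqrt{\psi-O_1}$ for a root $\psi$ of the parent, the sibling has root $-\theta$. Using $(O_1-\theta)(O_1+\theta)=O_1^2-\theta^2=O_2-f(\theta)$, I obtain the clean relation that the first digit of $g$ times the first digit of $g'$ equals the second digit of the parent. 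As the first digit of $g$ is $n$ by hypothesis, the conjecture becomes equivalent to the statement that \emph{the parent of $g$ has second digit $n$}.

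The hard part will be this last step, which is genuinely number-theoretic and is exactly the ``new phenomenon'' the paper isolates: the relations above are tautologically symmetric in $O_1$ and $O_o$, so neither Lemma 2.3 nor the orbit recursions alone can force the sign. To break the symmetry I would invoke realizability, namely that $g\mid f^n$ forces $\theta$ into the backward orbit of the critical point $0$, so that $f^{n-1}(\theta)$ is a square root of $-c=O_o$ in $\F_{q^d}$, together with the perfect-square identity $f(x)-c=x^2$, which pins the first digit of every parent to $s$ (as $\chi_{q^d}(O_1-f(\zeta))=\chi_{q^d}(-\zeta^2)=1$) and propagates squareness constraints along the backward orbit. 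Concretely I would induct on the level in the factorization tree, carrying the invariant that no realized even-degree factor has the exceptional type (first digit $n$, second digit $s$, last digit $n$), with base case the type of $f$ itself; the inductive step amounts to resolving, via the critical square and the constraint $O_o=-c$, which of the factorizations permitted by Lemma 2.3 actually occurs. I expect this resolution of the genuine child map — showing the exceptional types are never realized — to be the crux, since it is precisely the discrepancy between $M_2$ and ${M_1}^2$ that the rest of the paper documents.
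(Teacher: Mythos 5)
First, a point of calibration: the paper does not prove this statement. It is stated as a conjecture, and the only supporting argument given is the Example that follows it, which treats the single case where the even polynomial is a quartic $x^4+ax^2+b$ and shows by direct coefficient comparison in the factorization $g(x^2+c)=h(x)h(-x)$ that $h(\alpha)=(-2c+r/p+p\alpha/2)^2$ is literally a perfect square in $\F_q$, using $\alpha^2=-2c$. So there is no complete proof in the paper to match your attempt against; the question is whether your plan is sound, and I do not think it is.

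The central problem is your very first reduction. You correctly observe that if the type of $g$ begins with $n$ then $g(f(x))$ is irreducible and its $(o-1)$th digit equals the $o$th digit of $g$, so the statement \emph{as literally written} collapses to ``every realized $g$ whose type begins with $ns$ has last digit $s$,'' i.e.\ types of the form $ns\cdots n$ never occur. But that is contradicted by the paper's own data: in Example 3.1 the type $nsn$ occurs with frequency about $0.17$ (Table 1), and $nsnn$, $nsns$ occur in Table 2. Read against Observations 3.2 and 3.4 (which the conjecture is explicitly said to establish) and against the Example (where the even quartic is precisely the unique child $g\circ f$ of a quadratic $g$ with first digit $n$), the intended claim is about the factors of $g(f^2(x))$: the hypothesis ``first digit $n$'' forces the unique child $G=g\circ f$ to be an even \emph{function} of $x$, the hypothesis ``second digit $s$'' forces $G$ to split, and the assertion is that both resulting grandchildren have $(o-1)$th digit $s$. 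Your reduction therefore targets the wrong object, and this propagates: the invariant you propose to carry through the induction (``no realized even-degree factor has first digit $n$, second digit $s$, last digit $n$'') is exactly the false statement above, so the induction cannot close. Two further, smaller issues: your claim that the sibling of $g$ has root $-\theta$ is not automatic (the two factors of $P(f(x))$ may each be invariant under $x\mapsto -x$ rather than swapped), and your plan never engages with the mechanism the paper actually uses, namely that evenness of the parent plus $\alpha^2=-2c$ makes $h(\alpha)$ a visible perfect square; that identity, not Lemma 2.3 bookkeeping, is what breaks the symmetry you correctly identify as the crux, and your proposal leaves that crux unresolved by its own admission.
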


\begin{example} 
Note that the $o$th digit is $-c$ and so the $(o-1)$th digit is $\alpha$ where $\alpha^2+c = -c$, i.e.
$\alpha^2 = -2c$. Suppose that $g(x) = x^4+ax^2+b$. Then $g(x^2+c)$ factors as $h(x)h(-x)$ ($\ast$)
and we must show that $h(\alpha)$ is a square. If $h(x) = x^4+px^3+qx^2+rx+s$, then, comparing coefficients on the two sides of 
$(\ast)$, we eliminate $q,s,a$, leaving that 
$$h(\alpha) = (\alpha^2+p\alpha/2+r/p)^2 = (-2c+r/p+p\alpha/2)^2.$$
\end{example}

\begin{remark}\label{Remark}
The above conjecture applies to the case $f(x)=x^2-2$, too, which is the simplest
with tail size $1$.
It is, however, vacuous for factors of iterates of $x^2-2$
itself, because, as indicated by Jones and Boston \cite{jones+boston},
the factors are entirely of type $nn$ after a finite number of iterates, whatever $q$ is.
\end{remark}

We end by listing other cases investigated, not covered in
previous sections:
\begin{itemize}
\item[(i)]$o=4$.\\ $t=2$.\\ $p_{4,2}(c)=c^3 + c^2 - c + 1$.\\The first
example is $x^2+4\in \F_7[x]$.\\This appears to have no missing transitions, i.e. follows a Markov model.
\item[(ii)]$o=5$.\\ $t=2$.\\ $p_{5,2}(c)=c^{12} + 6c^{11} + 14c^{10} + 18c^9
+ 18c^8 + 16c^7 + 10c^6 + 6c^5 + 5c^4+ 2c^3 + 1$. \\ The first example is
$x^2+12\in \F_{17}[x]$.\\ This appears to have no missing transitions, i.e. follows a Markov model.
\item[(iii)]$o=4$.\\$t=3$.\\ $p_{4,3}(c)=c^7 + 4c^6 + 6c^5 + 6c^4 + 6c^3 +
4c^2 + 2c + 2$. \\ The first example is $x^2+2\in \F_7[x]$. \\ This appears to have no missing transitions, i.e. follows a Markov model.
\item[(iv)]$o=5$.\\$t=3$.\\$p_{5,3}(c)=c^8 + 4c^7 + 6c^6 + 6c^5 + 4c^4 + 1$.
\\ The first example is $x^2+1\in \F_{11}[x]$.\\ This appears to have no missing transitions, i.e. follows a Markov model.
\end{itemize}
The evidence so far suggests that the only cases where a Markov process does not hold are
those noted earlier, namely tail size $1$ and any orbit size or tail size $2$ and orbit size $3$. 
\begin{conj}\label{conjecture}
Let $f(x)$ be a
quadratic irreducible polynomial in $\F_q[x]$ with orbit size $o$ and tail
size $t$. Then the Markov model fits the factorization process for iterates
of $f$ if and only if $(o,t)\not \in  \left\{ {(m,1)| m \geq 2}\right\}\cup
\left\{ {(3,2)}\right\} $.
\end{conj}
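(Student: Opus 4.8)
The plan is to reduce the statement to a single matrix identity and then treat the two implications by different means. Writing $a = t+1$, the multi-step model of Section 4 shows that the original Markov model fits $f$ exactly when the discrepancy matrix vanishes: by the recurrence (1) together with parts (i) and (ii) of the corollary following it, one has $M_m = M_1^m$ for all $m$ if and only if $A = 0$, where part (iii) records $A = M_a - M_1^a$. Thus Conjecture 6.3 becomes the claim that, over every admissible $\F_q$, the discrepancy $A$ is nonzero precisely when $(o,t) \in \{(m,1) : m \geq 2\} \cup \{(3,2)\}$ and vanishes otherwise. I would prove the ``only if'' and ``if'' directions separately, the former by producing explicit forbidden transitions and the latter by ruling them out uniformly.

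For the ``only if'' direction it suffices, in each exceptional family, to exhibit one transition that is allowed by Lemma 2.3 but never occurs, forcing $A \neq 0$. When $t = 1$ (so $a = 2$) this is exactly the content of the refined conjecture (Conjecture 6.1): an even irreducible factor $g$ whose type begins with $ns$ satisfies $g(f(x)) = h(x)h(-x)$, and one must show that $h$ is a square at the $(o-1)$th orbit point, which (as in the Example following Conjecture 6.1) is the point $\beta$ with $\beta^2 = -2c$. The displayed quartic computation settles $\deg g = 4$ by completing the square; I would extend it to arbitrary even degree by expressing the coefficients of $h$ in terms of those of $g$ using the evenness of $g(f(x))$, and checking that the resulting value $h(\beta)$ is forced into the square class of $\F_q$. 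This pins the $(o-1)$th digit to $s$ and kills the 2-step transitions of Observations 3.1 and 3.2, so $A \neq 0$ for all $(m,1)$. For the remaining pair $(3,2)$, where $a = 3$, I would prove Observation 3.3 directly: a finite square-class computation along the chain $nns \mapsto nss \mapsto sss$, using the defining relation $c^3 + 2c^2 + 2c + 2 = 0$, shows that the 3-step descendants of types $nss/nss$ and $snn/snn$ cannot both appear, again giving $A \neq 0$.

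The ``if'' direction is the main obstacle, because it is a universal statement: for every $(o,t)$ outside the exceptional set and every admissible $q$ one must show that no transition allowed by Lemma 2.3 is further obstructed, i.e. that $A = 0$. My approach would be to read each digit as the value of the quadratic character of $\F_q$ at an orbit point and to show that, away from the exceptional pairs, the square-class conditions imposed along a length-$a$ chain are independent beyond the single coupling already recorded in Lemma 2.3 --- equivalently, that the image of the arboreal Galois representation on the part of the tree controlling $a$ consecutive levels is as large as that lemma permits. The exceptional cases are exactly those carrying an extra coincidence that shrinks this image: for $t = 1$ the last orbit point equals the negative of the first, $f^o(\alpha) = -c = -f(\alpha)$, which together with the evenness of $f$ ties the first and $o$th digits of even factors, and an analogous low-level degeneracy occurs for $(3,2)$. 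The hard part, and the reason Conjecture 6.3 remains a conjecture rather than a theorem, is to prove that these are the only such coincidences uniformly in $q$; I expect this to require controlling the square classes of the discriminants and resultants attached to the critical orbit and showing that they are generically independent, which goes well beyond the compelling numerical evidence gathered in Sections 5 and 6.
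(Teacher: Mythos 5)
There is no proof to compare against: the statement you are addressing is Conjecture 6.3, which the paper explicitly leaves as a conjecture supported only by the computations of Sections 3, 5 and the case list in Section 6. Your submission is likewise not a proof but a research plan, and each of its load-bearing steps is itself an open problem. First, your reduction ``the Markov model fits iff $A=0$'' silently assumes that the multi-step recurrence (1) actually governs the factorization process; that is Conjecture 4.3, also unproven, so even the equivalence you start from is conditional. Second, your ``only if'' direction rests on Conjecture 6.1 (to forbid the 2-step transitions when $t=1$) and on Observation 3.3 (for $(o,t)=(3,2)$). The paper proves neither: for Conjecture 6.1 it gives only the degree-4 completion-of-the-square computation, and your proposed extension to arbitrary even degree --- ``expressing the coefficients of $h$ in terms of those of $g$ \ldots and checking that $h(\beta)$ is forced into the square class'' --- is exactly the unverified step; there is no argument that the elimination of variables produces a perfect square in general. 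Observation 3.3 is purely empirical, and your ``finite square-class computation along the chain'' is not carried out.

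Third, and most seriously, you concede the ``if'' direction outright. The claim that away from the exceptional pairs the square-class conditions along a length-$(t+1)$ chain are ``independent beyond the single coupling in Lemma 2.3'' is a universal statement over all $q$ and all admissible $c$, and asserting that the arboreal Galois image is ``as large as that lemma permits'' is a restatement of the conjecture, not a proof of it. You also do not address a smaller but genuine issue: to conclude that the Markov model \emph{fails} from a forbidden transition, one must know that the source type actually occurs with positive frequency for the given $f$; a transition out of a type that never arises would be vacuously forbidden. In short, your proposal correctly identifies where the difficulty lies and mirrors the paper's own heuristics, but it establishes neither implication; the statement remains, as in the paper, a conjecture.
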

\section*{Acknowledgements}
 The authors owe Rafe Jones a debt of gratitude for his valuable comments on this work during the preparation of the article.
\bibliographystyle{apacite}

\end{document}